\newtheorem{theorem}[subsection]{Theorem}
\newtheorem{lemma}[subsection]{Lemma}
\newtheorem{definition}[subsection]{Definition}
\newcommand{\Q}{\mathbb Q}
\renewcommand{\phi}{\varphi}
\newcommand{\Spec}{\mathrm{Spec}\ }
\DeclareMathOperator{\Gal}{Gal}
\DeclareMathOperator{\Res}{Res}
\title{A virtually ample field that is not ample}
\author{Padmavathi Srinivasan}
\date{\today}
\begin{document}

\begin{abstract}
 A field $K$ is called ample if for every geometrically integral $K$-variety $V$ with a smooth $K$-point, $V(K)$ is Zariski-dense in $V$. A field $K$ is virtually ample if some finite extension of $K$ is ample. We prove that there exists a virtually ample field that is not ample. 
\end{abstract}

\maketitle

\section{Introduction}
In the paper \cite{pop}, Pop identified a class of fields, which he called {\textup{large}} fields, over which a certain Galois theoretic problem (the regular solvability of finite split embedding problems) could be solved. Since their introduction, these fields have played an important role, not only in Galois theory, but also other fields such as arithmetic geometry, valuation theory and model theory. 
\begin{definition}({\cite[p.~2]{pop},\cite[Chapter~5]{jar}})
 A field $K$ is called {\textup{\textsf{ample}}} (or {\textup{\textsf{large}}} or {\textup{\textsf{anti-Mordellic}}}) if for every geometrically integral $K$-variety $V$ with a smooth $K$-point, $V(K)$ is Zariski-dense in $V$. 
\end{definition}
Pseudo-algebraically closed (PAC) fields and Henselian fields are ample; see \cite[Chapter 5]{jar} for more examples. See \cite{barysorfehm} for a survey of the theory of ample fields. 

\begin{definition}{\cite[p.~5]{barysorfehm}}
 A field $K$ is called {\textup{\textsf{virtually ample}}} if some finite extension of $K$ is ample.
\end{definition}

In \cite{barysorfehm}, Bary-Soroker and Fehm ask if every virtually ample field is ample. We prove that there exists a virtually ample field that is not ample (Theorem~\ref{uniformquadratic}). Our proof is modelled after the following result of Jarden and Poonen; however, there are difficulties in directly adapting their strategy, which we explain in our construction.

\begin{lemma}{\cite[Lemma~5.3]{jarpoon}}
 Every field $K$ admits a regular extension $K'$ such that
 \begin{enumerate}[(i)]
  \item Every smooth geometrically-integral $K$-curve $X$ has a point over an at most quadratic extension of $K'$ (possibly
depending on $X$).
  \item For every smooth geometrically integral $K$-curve $C$ of absolute genus $> 1$, we have $C(K') = C(K)$.
 \end{enumerate}
\end{lemma}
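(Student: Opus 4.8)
The plan is to obtain $K'$ as an increasing transfinite union of regular extensions of $K$, where at each successor stage I adjoin the function field of the second symmetric power $\Sym^2 X$ of a curve $X$. Adjoining $\Sym^2 X$ is exactly what produces a quadratic point on $X$ while, as I will argue, doing no damage to curves of genus $>1$; the single geometric fact that makes this work is the following. \emph{Claim:} if $L$ is a field, $X$ a smooth projective geometrically integral $L$-curve, and $C$ a smooth projective geometrically integral $L$-curve of genus $\geq 2$, then there is no dominant rational map $\Sym^2 X \dashrightarrow C$; equivalently, for $F = L(\Sym^2 X)$ one has $C(F) = C(L)$.

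Proving this claim is the step I expect to be the crux. The equivalence is the usual dictionary: an $F$-point of the proper curve $C$ is the same as a rational map $\Sym^2 X \dashrightarrow C$ over $L$, with constant maps accounting for $C(L)$ and nonconstant maps automatically dominant. To rule out a dominant $f\colon \Sym^2 X \dashrightarrow C$ I would base change to $\overline{L}$ (a nonconstant rational map stays nonconstant) and invoke the Albanese. Because $\mathrm{genus}(C)\geq 2$, the Abel--Jacobi map $C\hookrightarrow \mathrm{Jac}(C)$ is a closed immersion; a rational map from the smooth variety $\Sym^2 X$ to the abelian variety $\mathrm{Jac}(C)$ is a morphism; and $\mathrm{Alb}(\Sym^2 X)=\mathrm{Jac}(X)$. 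Hence $f$ followed by $C\hookrightarrow\mathrm{Jac}(C)$ is a homomorphism $\psi\colon \mathrm{Jac}(X)\to\mathrm{Jac}(C)$ up to translation, so the image of $f$ is simultaneously a translate of the abelian subvariety $\psi(\mathrm{Jac}(X))$ and a subset of $C$. A positive-dimensional translated abelian subvariety contained in $C$ would force $\mathrm{genus}(C)=1$; therefore $\psi=0$, $f$ is constant, and the claim holds. This non-domination statement is precisely what reconciles the opposing demands (i) and (ii).

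Granting the claim, the rest is transfinite bookkeeping. First I would reduce to smooth projective curves by passing to smooth completions: the generic degree-$2$ divisor on a completion $\overline{X}$ is supported in the dense open $X$, so a quadratic point on $\overline{X}$ constructed below actually lies on $X$; and controlling the completion of a genus-$>1$ curve controls the curve. Since the isomorphism classes of smooth projective geometrically integral $K$-curves form a set, enumerate them as $(X_\alpha)_{\alpha<\kappa}$. Put $K_0=K$, let $K_{\alpha+1}=K_\alpha(\Sym^2 X_\alpha)$ be the function field of $\Sym^2$ of the base change of $X_\alpha$ to $K_\alpha$, take unions at limits, and set $K'=\bigcup_{\alpha<\kappa}K_\alpha$. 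Each $\Sym^2 X_\alpha$ is geometrically integral over $K_\alpha$, so every stage is a regular extension; as regularity is preserved under composition and directed unions, $K'/K$ is regular.

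It remains to check (i) and (ii). For (i): over $F=K_{\alpha+1}$ the generic point of $\Sym^2 X_\alpha$ is a length-$2$ subscheme of $(X_\alpha)_F$, i.e.\ a point of $X_\alpha$ defined over the quadratic extension $K_\alpha(X_\alpha\times X_\alpha)$ of $F$; taking the compositum with $K'$, the curve $X_\alpha$ acquires a point over an at most quadratic extension of $K'$. For (ii): given a $K$-curve $C$ of genus $>1$, the claim applied over $K_\alpha$ gives $C(K_{\alpha+1})=C(K_\alpha)$ at successors, while $C(K_\lambda)=\bigcup_{\alpha<\lambda}C(K_\alpha)$ at limits since $C$ is of finite type; transfinite induction yields $C(K')=C(K)$. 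The only substantive ingredient is the claim — the function-field dictionary, geometric integrality and regularity along the tower, the directed-union argument at limits, and the cardinality bound used to enumerate the curves are all routine.
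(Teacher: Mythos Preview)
The paper does not prove this lemma at all: it is quoted verbatim from \cite[Lemma~5.3]{jarpoon} as motivation, and the text moves directly to the Notation section. There is thus no proof in the paper to compare your proposal against.

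On the proposal itself: the transfinite bookkeeping and the deduction of (i) and (ii) from your Claim are fine, but the Albanese argument for the Claim has a gap when $g(X)\ge 3$. You assert that the image of $\Sym^2 X$ in $\mathrm{Jac}(C)$ equals the translated abelian subvariety $\psi(\mathrm{Jac}(X))$; in fact it equals $\psi(\alpha(\Sym^2 X))=\psi(W_2)$, where $\alpha\colon \Sym^2 X\to \mathrm{Jac}(X)$ is the Albanese map and $W_2$ its image. For $g(X)\ge 3$ the map $\alpha$ is not surjective ($W_2$ is a surface inside a $g(X)$-dimensional abelian variety), so $\psi(W_2)$ need not be a translated abelian subvariety, and the contradiction ``a positive-dimensional abelian subvariety contained in $C$ forces $g(C)=1$'' does not apply. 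Your argument is correct only for $g(X)\le 2$, where $\alpha$ is onto. The clean fix is the one underlying the paper's Lemmas~\ref{factoring} and~\ref{nochange2}: pull back along the degree-$2$ quotient $X\times X\to \Sym^2 X$; by de~Franchis--Severi the resulting map $X\times X\dashrightarrow C$ factors through a projection $\pi_i$, and since it is invariant under the swap $s$ it factors through both projections, hence is constant.
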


\section{Notation}
Let $K$ be a field. For an automorphism $\sigma \colon K \rightarrow K$, let $K^{\sigma}= \{x \in K \colon \sigma(x) = x \} $. A $K$-{\textsf{variety} is a separated scheme of finite type over $K$. A $K$-{\textsf{curve}} is a smooth, geometrically integral $K$-variety of dimension $1$. Let $\overline{K}$ be an algebraic closure of $K$. The {\textsf{absolute genus}} $g_C$ of a $K$-curve $C$ is the genus of a smooth projective model of $C_{\overline{K}}$. Let $\mathscr{C}_K$ denote the set of isomorphism classes of $K$-curves $C$ with $g_C \geq 2$ and let $\mathscr{V}_K$ denote the set of isomorphism classes of positive-dimensional geometrically integral $K$-varieties. We will abuse notation and often let $C \in \mathscr{C}_K$ and $V \in \mathscr{V}_K$ denote specific representatives of the corresponding isomorphism classes, when it is clear that the statement in question is independent of the particular representative chosen.

For a $K$-variety $V$ and an extension of fields $L \supset K$, let $V_L$ denote the base extension $V \times_K L$; similarly for a $K$-rational map $\phi$, let $\phi_L$ denote its base extension to $L$. Given a finite extension of fields $L \supset K$ and a quasi-projective $L$-variety $V$, let $\Res_{L/K} V$ denote the quasi-projective $K$-variety obtained by restriction of scalars.

Given an extension of fields $L \supset K$ and a geometrically integral $K$-variety $V$, let $L(V)$ denote the function field of the $L$-variety $V_L$. Given a field $L$ and a finite set $S = \{V_1,V_2,\ldots,V_n\}$ of geometrically integral varieties defined over subfields of $L$, let $L(S) \colonequals L(V_1 \times \cdots \times V_n).$
  
\section{Proof}

\begin{definition}
 An extension of fields $L \supset K$ is a {\textup{\textsf{good}}} extension if $C(L) = C(K)$ for every $C \in \mathscr{C}_K$. 
\end{definition}

\begin{lemma}{\label{goodness}}
 Let $L_0 \subset L_1 \subset L_2 \subset \ldots$ be a tower of finitely generated field extensions of $\Q$, and let $L = \varinjlim L_n$. Assume that $L_{n+1}$ is a good extension of $L_n$ for every $n$. Let $C \in \mathscr{C}_L$. Then $C(L)$ is finite. 
\end{lemma}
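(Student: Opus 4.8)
The plan is to descend $C$ to a curve defined over some stage $L_n$ of the tower, iterate the goodness hypothesis up the tower to pin down the $L$-points, and then quote Faltings.

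First, a spreading-out step. Since $C$ is of finite type over $L = \varinjlim L_n$, it is obtained by gluing finitely many affine schemes, each cut out by finitely many polynomials, all of whose coefficients lie in some $L_n$; hence there is an index $n$ and an $L_n$-scheme $C_0$ of finite type with $(C_0)_L \isom C$. By the standard limit formalism (EGA IV, \S8), after enlarging $n$ we may assume that $C_0 \to \Spec L_n$ is smooth of relative dimension $1$ and that $C_0$ is geometrically integral, so that $C_0$ is an $L_n$-curve. The absolute genus is insensitive to base field extension, since $g_{(C_0)_{L_m}}$ and $g_{C_0}$ both equal the genus of a smooth projective model of $(C_0)_{\overline{L_n}}$; in particular $g_{C_0} = g_C \geq 2$, so $C_0 \in \mathscr{C}_{L_n}$, and moreover $(C_0)_{L_m} \in \mathscr{C}_{L_m}$ for every $m \geq n$.

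Next I would identify the $L$-points. Since $C_0$ is of finite type over $L_n$ and $L = \varinjlim_{m \geq n} L_m$, we have $C(L) = C_0(L) = \varinjlim_{m \geq n} C_0(L_m)$. Now iterate goodness: for each $m \geq n$, the extension $L_{m+1} \supset L_m$ is good and $(C_0)_{L_m} \in \mathscr{C}_{L_m}$, so $C_0(L_{m+1}) = (C_0)_{L_m}(L_{m+1}) = (C_0)_{L_m}(L_m) = C_0(L_m)$. By induction on $m$ this gives $C_0(L_m) = C_0(L_n)$ for all $m \geq n$, hence $C(L) = C_0(L_n)$.

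Finally, finiteness. The field $L_n$ is finitely generated over $\Q$ and $C_0$ is a smooth geometrically integral $L_n$-curve of genus $\geq 2$, so $C_0(L_n)$ is finite by the Mordell conjecture over finitely generated fields of characteristic zero: this follows from Faltings' theorem over number fields together with a specialization argument, or alternatively from the Mordell--Lang theorem of Faltings and Vojta combined with the Lang--N\'eron finiteness of the Mordell--Weil group. I expect the main obstacle to be making the descent genuinely careful --- ensuring that smoothness, geometric integrality, relative dimension, and the genus $\geq 2$ condition all descend to a single finite stage, so that $(C_0)_{L_m}$ lies in $\mathscr{C}_{L_m}$ and the goodness hypothesis can legitimately be applied at every step; the deep arithmetic input (Faltings' theorem) enters only as a black box.
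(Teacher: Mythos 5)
Your proposal is correct and follows essentially the same route as the paper: descend $C$ to a curve over some finitely generated stage $L_n$, apply finiteness of rational points on genus $\geq 2$ curves over finitely generated extensions of $\Q$ (the paper cites the Mordell conjecture over such fields via Lang, the same arithmetic input you invoke through Faltings), and propagate $C_0(L_m) = C_0(L_n)$ up the tower using goodness. Your added care with the spreading-out step (smoothness, geometric integrality, genus) is a legitimate elaboration of what the paper compresses into ``$C$ is defined over $L_k$ for some $k$.''
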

\begin{proof}
 Since $C$ is of finite type, $C$ is defined over $L_k$ for some $k$. Since $L_k$ is a finitely generated extension of $\Q$ and $g_C \geq 2$, the function field analogue of the Mordell conjecture \cite[Theorem 2.3]{lang} implies that $C(L_k)$ is finite. Since $L_{n+1}$ is a good extension of $L_n$ for every $n$, it follows that $C(L_k) = C(L_{k+1}) = \ldots = \varinjlim_{n \geq k} C(L_n) = C(L)$.
\end{proof}

\begin{lemma}{\label{factoring}}
 Let $K$ be a field of characteristic $0$, let $V,W \in \mathscr{V}_K$ and let $C \in \mathscr{C}_K$. Let $\psi \colon V \times W \dashrightarrow C$ be a rational map. Then $\psi$ factors through either the projection $\pi_1 \colon V \times W \rightarrow V$ or the projection $\pi_2 \colon V \times W \rightarrow W$.
\end{lemma}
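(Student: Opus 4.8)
The plan is to recast the statement as one about a single morphism into an abelian variety, and then to invoke that a curve of genus $\geq 2$ cannot arise as a ``sum'' governed by two independent factors.

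\emph{Reductions.} Since $K$ has characteristic $0$, choose smooth projective models $\overline V, \overline W$ of $V, W$, so that $X \colonequals \overline V \times \overline W$ is smooth, projective, geometrically integral and birational to $V \times W$; replace $C$ by its smooth projective model $\overline C$, still a curve of genus $\geq 2$. A rational map $X \dashrightarrow \overline C$ is the same thing as a point of $\overline C$ over the function field $K(X) = K(V \times W)$, and under this translation ``$\psi$ factors through $\pi_1$'' (resp.\ $\pi_2$) becomes ``$\psi \in \overline C\bigl(K(\overline V)\bigr)$'' (resp.\ ``$\psi \in \overline C\bigl(K(\overline W)\bigr)$''); since $\overline V$, $\overline W$ and $X$ are geometrically integral, this condition descends along $\overline K / K$, so we may assume $K = \overline K$. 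If $\psi$ is not dominant its image is a point and it factors through both projections, so assume $\psi$ dominant. Finally, choose $x_0 = (v_0, w_0)$ in the domain of definition of $\psi$ and embed $\overline C$ as a closed subvariety of its Jacobian $J$ via the Abel--Jacobi map based at $\psi(x_0)$ (a closed immersion since $g_{\overline C} \geq 1$); the composite $X \dashrightarrow \overline C \hookrightarrow J$ then has image exactly $\overline C$ and sends $x_0$ to $0$.

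\emph{Passing to the Albanese.} By Weil's theorem on extending rational maps to abelian varieties (alternatively: resolve the indeterminacy of $\psi$ and use that $\mathrm{Alb}$ is a birational invariant of smooth projective varieties), the above composite is a morphism $g \colon X \to J$; since $g(x_0) = 0$ it factors through $\mathrm{alb}_X$, and because $\mathrm{Alb}(X) = \mathrm{Alb}(\overline V) \times \mathrm{Alb}(\overline W)$ we can write
\[
 g(v, w) = \alpha_1\bigl(\mathrm{alb}_{\overline V}(v)\bigr) + \alpha_2\bigl(\mathrm{alb}_{\overline W}(w)\bigr)
\]
for homomorphisms $\alpha_1 \colon \mathrm{Alb}(\overline V) \to J$ and $\alpha_2 \colon \mathrm{Alb}(\overline W) \to J$, with $g(X) = \overline C$.

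\emph{The dichotomy.} If $\alpha_2 = 0$, then $g$, and hence $\psi$ (as $\overline C \hookrightarrow J$ is a monomorphism), depends only on $v$ and so factors through $\pi_1$; symmetrically $\alpha_1 = 0$ forces factorization through $\pi_2$. So suppose $\alpha_1, \alpha_2 \neq 0$, and let $D_i$ be the (irreducible) image of $\alpha_i \circ \mathrm{alb}$, so that $\overline C = g(X) = D_1 + D_2$. Since each $\alpha_i \neq 0$ and the image of each Albanese map generates its Albanese variety, both $D_1, D_2$ are positive-dimensional, hence---as $\dim(D_1 + D_2) = \dim \overline C = 1$---both are curves. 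The crux of the proof is the following statement in the geometry of abelian varieties: if $D_1, D_2 \subset J$ are irreducible curves and the image of the addition morphism $D_1 \times D_2 \to J$ is again a curve, then (by a fiber-dimension and stabilizer computation) that image is a translate of an elliptic curve. Applied here, this makes $\overline C = D_1 + D_2$ a translate of an elliptic curve, hence of genus $1$, contradicting $g_{\overline C} \geq 2$. This last step---the only substantial one, and the place where the hypothesis $g_C \geq 2$ is actually used---is the one I expect to require real care; the rest is bookkeeping together with descending the conclusion from $\overline K$ back to $K$ via the function-field reformulation. (One could instead replace the Albanese argument by the de Franchis--Severi theorem that the set of dominant rational maps $W \dashrightarrow C$ is finite, applied to the family of restrictions of $\psi$ to the fibers of $\pi_1$, which are dominant whenever $\psi$ does not factor through $\pi_1$; then this family must be constant, forcing $\psi$ to factor through $\pi_2$.)
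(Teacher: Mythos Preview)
Your Albanese argument is correct and takes a genuinely different route from the paper. The paper's proof is exactly your closing parenthetical: after reducing to $K=\overline K$, it views $\psi$ as a family of rational maps $W \dashrightarrow C$ parametrized by an open $U \subset V$, lets $U' \subset U$ be the locus of nonconstant maps, and notes that either $U'=\varnothing$ (so $\psi$ factors through $\pi_1$) or $U'$ is irreducible and maps to the finite set $C(K(W))\setminus C(K)$ (de Franchis--Severi), forcing the family to be constant and $\psi$ to factor through $\pi_2$. Your approach instead trades this black-box finiteness for an explicit geometric obstruction: decomposing via $\mathrm{Alb}(\overline V\times\overline W)=\mathrm{Alb}(\overline V)\times\mathrm{Alb}(\overline W)$ and showing that a curve of genus $\geq 2$ embedded in its Jacobian cannot be a Minkowski sum $D_1+D_2$ of two irreducible curves. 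The stabilizer computation you sketch does go through: general fibers of $D_1\times D_2\to D_1+D_2$ being one-dimensional forces $D_1=p-D_2$ for general $p$, so differences of general points lie in $\mathrm{Stab}(D_2)$, whose identity component $E$ is then an elliptic curve with $D_2$ an $E$-coset, and irreducibility of $D_1+D_2$ pins it down as a single $E$-translate. Your version is longer but makes the role of $g_C\geq 2$ geometrically transparent and avoids citing de Franchis--Severi; the paper's version is a two-line reduction to that theorem and correspondingly more economical.
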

\begin{proof}
 By replacing $V,W,C$ by $V_{\overline{K}},W_{\overline{K}},C_{\overline{K}}$ and $\psi$ by $\psi_{\overline{K}}$, we may assume $K = \overline{K}$ (if $\psi_{\overline{K}}$ factors through one of the projections, then so does $\psi$). We may view $\psi$ as an algebraic family of rational maps $W \dashrightarrow C$ parametrized by an open subvariety $U$ of $V$. Let $U' \subset U$ be the open subvariety that parametrizes non-constant rational maps $W \dashrightarrow C$. 
 
 If $U'$ is empty, then $\psi$ factors through $\pi_1$. Now assume that $U'$ is non-empty. Then $U'$ is irreducible, since $V$ is irreducible and $U'$ is open in $V$. Since $C(K(W)) \setminus C(K)$ is finite (by the de Franchis-Severi theorem \cite[p.~13]{lang}) and $U'$ is irreducible, the rational maps induced by elements of $U'$ are all equal, which implies that $\psi$ factors through $\pi_2$.
\end{proof}

\begin{lemma}{\label{nochange2}} 
Let $K$ be a field of characteristic $0$, let $V \in \mathscr{V}_K$ be a quasi-projective variety and let $L/K$ be a quadratic extension. Let $W := \Res_{L/K} V_L$. Then $K(W)$ is a good extension of $K$.
\end{lemma}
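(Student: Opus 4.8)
The plan is to prove directly that $C(K(W)) = C(K)$ for every $C \in \mathscr{C}_K$; since $C(K) \subseteq C(K(W))$ is automatic, the content is the reverse inclusion. A point $P \in C(K(W))$ amounts to a $K$-rational map $\psi\colon W \dashrightarrow C$, and the goal is to show that $\psi$ is ``constant'', i.e.\ that $P$ lies in $C(K)$. The structural fact that makes this possible is that restriction of scalars along the quadratic extension becomes a \emph{square} after base change: writing $L = K(\sqrt{d})$ with $\Gal(L/K) = \{1, \sigma\}$, there is a natural isomorphism of $L$-varieties $W_L = (\Res_{L/K} V_L)_L \isom V_L \times_L V_L$ under which the natural $\sigma$-action on $W_L$ (coming from the $K$-structure of $W$) corresponds to the transposition of the two factors, composed with the action of $\sigma$ on the coefficients of $V_L$. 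This is exactly the step where the hypothesis that $V$ be quasi-projective enters, since $\Res_{L/K}$ is only defined for quasi-projective varieties. I also record that $V_L$ is geometrically integral and positive-dimensional over $L$ (being the base change of such a variety), so $V_L, V_L \times_L V_L \in \mathscr{V}_L$; that, since $W_{\overline{K}} \isom V_{\overline{K}} \times_{\overline{K}} V_{\overline{K}}$ is integral, $W$ is geometrically integral over $K$; and that $C_L \in \mathscr{C}_L$, since base change preserves geometric integrality and the absolute genus.

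Next I would base-change $\psi$ to $L$ and transport it across the isomorphism above to a rational map $\psi_L\colon V_L \times_L V_L \dashrightarrow C_L$. By Lemma~\ref{factoring}, applied over the field $L$ with $V_L$ in the role of both $V$ and $W$, the map $\psi_L$ factors through one of the two projections $\pi_1, \pi_2 \colon V_L \times_L V_L \to V_L$; say through $\pi_1$. Because $\psi$ is defined over $K$, the identification of $\psi_L$ with this rational map is equivariant for the two $\sigma$-actions, and since $\sigma$ transposes $\pi_1$ and $\pi_2$ it follows formally that $\psi_L$ also factors through $\pi_2$. But a rational map factoring through both coordinate projections of a product is constant: after base change to $\overline{L}$, on a dense open $U_1 \times U_2$ one has $\psi_L = \alpha\circ\pi_1 = \beta\circ\pi_2$, and fixing a general point of $U_2$ forces $\alpha$, hence $\psi_L$, to be constant. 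Thus $\psi_L$ factors through $\Spec L \to C_L$; equivalently, the point $P$, viewed in $C(L(W))$, already lies in $C(L)$.

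Finally I would combine the two memberships of $P$. Inside $C(L(W))$ — where $L(W)$ is the function field of the integral variety $W_L$, a field containing both $K(W)$ and $L$ — we have $P \in C(K(W)) \cap C(L)$. Since $W$ is geometrically integral over $K$, the field $K$ is algebraically closed in $K(W)$, and as $L/K$ is algebraic this gives $K(W) \cap L = K$; because $C$ is separated, $C(K(W)) \cap C(L) = C(K(W)\cap L) = C(K)$, so $P \in C(K)$, as desired. I expect the only genuinely delicate point to be the first paragraph: correctly setting up the isomorphism $W_L \isom V_L \times_L V_L$ and justifying that $\sigma$ acts by the factor transposition up to the coefficient action — this rests on $L \otimes_K L \isom L \times L$ and on the Galois twist $\sigma^* V_L$ being canonically $V_L$ because $V$ already descends to $K$. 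Once that is in hand, the remaining steps (invoking Lemma~\ref{factoring}, the symmetry argument, and the elementary ``factors through both projections $\Rightarrow$ constant'') are routine.
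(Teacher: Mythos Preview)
Your proof is correct and follows essentially the same route as the paper: identify $C(K(W))$ with $\sigma$-equivariant rational maps $V_L\times V_L\dashrightarrow C_L$ via the isomorphism $W_L\cong V_L\times V_L$, invoke Lemma~\ref{factoring} to factor through one projection, use the swap action of $\sigma$ to force factorization through the other, and conclude constancy. The only cosmetic difference is the final descent: the paper observes directly that the constant value $c\in C(L)$ satisfies $^\sigma c=c$ (from $^\sigma\psi=\psi s$) and hence lies in $C(K)$, whereas you argue via $C(K(W))\cap C(L)=C(K)$ using that $K$ is algebraically closed in $K(W)$; both are valid.
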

\begin{proof}
 Let $C \in \mathscr{C}_K$. Let $\sigma$ be the nontrivial element of $\Gal (L/K)$. Then,
 \begin{align*}\label{rat}\tag{$\star$}
 \begin{split}
 C(K(W)) &\cong \{ \textup{rational maps $\phi \colon W \dashrightarrow C$} \} \\ 
  &\cong \{ \textup{rational maps $\phi \colon W_L \dashrightarrow C_L$ such that $^\sigma \phi = \phi$} \}. 
 \end{split}
 \end{align*}
 Let $\rho \colon V_L \times V_L \rightarrow W_L$ be the isomorphism coming from the universal property of the restriction of scalars (since $V$ is defined over $K$, we get $^\sigma V_L \simeq V_L$). Let $s \colon V_L \times V_L \rightarrow V_L \times V_L$ be the involution that swaps the two factors. Then $^\sigma \rho = \rho s$. Given any rational map $\phi \colon W_L \dashrightarrow C_L$, let $\psi = \phi \rho$. Since $^\sigma \psi = {^\sigma}(\phi \rho) = (^\sigma \phi) (^\sigma \rho) = (^\sigma \phi) \rho s$, it follows that $^\sigma {\psi} = {\psi} s$ if and only if $^\sigma \phi = \phi$. Thus \ref{rat} can be rewritten as 
 \begin{align*}
 C(K(W)) &\cong \{ \textup{rational maps $\psi \colon V_L \times V_L \dashrightarrow C_L$ such that $^\sigma \psi = \psi s$} \}. 
 \end{align*}
 Lemma~\ref{factoring} tells us that any rational map $\psi \colon V_L \times V_L \dashrightarrow C_L$ factors through one of the projections to $V_L$. Let $\pi_1$ and $\pi_2$ be the two projection maps; these are defined over $K$. Without loss of generality, assume $\psi = \psi' \pi_1$ for some $\psi' \colon V_L \dashrightarrow C_L$. Also assume $^\sigma \psi = \psi s$. Then, $^\sigma \psi = \psi s = \psi' \pi_1 s = \psi' \pi_2$ and therefore by applying $\sigma$ once again to $^\sigma \psi$, we get $\psi = {^\sigma} (^\sigma \psi) = (^\sigma \psi') (^\sigma \pi_2) = (^\sigma \psi') \pi_2 $. Since $\psi$ factors through both projections, it is constant. Therefore $C(K(W)) = C(K)$. 
\end{proof}

\begin{definition}
 An {\textup{\textsf{involuted field}}} consists of a pair $(K,\iota)$ where $K$ is a field and $\iota \colon K \rightarrow K$ is an automorphism such that $\iota^2 = 1_K$ (Note that we allow $\iota = 1_K$). An {\textup{\textsf{extension of an involuted field}}} $(K,\iota)$ is an involuted field $(L,\tau)$ such that $L \supset K$ and $\tau$ extends the involution $\iota$ on $K$. A {\textup{\textsf{finitely generated involuted field}}} is an involuted field $(K,\iota)$ where $K$ is a finitely generated field extension of $\Q$.
\end{definition}

\begin{lemma}{\label{extension}}
Let $(K,\iota)$ be an involuted field. Let $F = K^{\iota}$. Given a quasi-projective $V \in \mathscr{V}_F$, there exists an extension $(M,\tau)$ of $(K,\iota)$ such that $M^{\tau}$ is a good extension of $F$, and $V(M) \neq V(K)$.
\end{lemma}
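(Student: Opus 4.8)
The plan is to adjoin to $K$ a ``generic'' point of $V$, but in a doubled-up form — a Weil restriction — so that curves of absolute genus $\geq 2$ can still be controlled. The naive attempt $M=K(V_K)$ already produces a new point of $V$ (the tautological one), but its $\iota$-fixed field is essentially $F(V)$, which fails to be a good extension of $F$ as soon as $V$ happens to be a curve of absolute genus $\geq 2$. Lemma~\ref{nochange2} shows how to repair this: passing from $V$ to a Weil restriction along a quadratic extension replaces the variety, over that extension, by a product $V\times V$, and a rational map from a product to a genus-$\geq 2$ curve that is compatible with the Galois action is forced to be constant — Lemma~\ref{factoring} makes it factor through one projection, and compatibility then makes it factor through both.

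Concretely, I would take $W:=V\times_F V\in\mathscr V_F$ (a smooth, geometrically integral, positive-dimensional $F$-variety), set $M:=K(W_K)=K(V_K\times_K V_K)$, and let $\tau$ be the involution of $M$ induced by the $\iota$-semilinear, order-two automorphism $\sigma_V$ of $W_K=V_K\times_K V_K$ given on points by $(x,y)\mapsto({}^{\iota}y,{}^{\iota}x)$ (using that $V$ is defined over $F$ to identify ${}^{\iota}(V_K)$ with $V_K$). Then $\tau^2=1_M$ and $\tau|_K=\iota$, so $(M,\tau)$ is an extension of $(K,\iota)$. If $\iota\neq 1_K$, then $[K:F]=2$ and $(W_K,\sigma_V)$ is precisely the descent datum defining $\Res_{K/F}(V_K)$, so by Galois descent $M^{\tau}=F(\Res_{K/F}(V_K))$, which is a good extension of $F$ by Lemma~\ref{nochange2} (with $F$, $K$, $V$ playing the roles of $K$, $L$, $V$ there). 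If $\iota=1_K$, then $K=F$ and $\sigma_V$ is the coordinate swap $s$, so $M^{\tau}=F(V\times_F V)^{s}$; that this is good over $F$ follows from the same argument as in the proof of Lemma~\ref{nochange2} — a swap-invariant rational map $V\times_F V\dashrightarrow C$ factors through both projections by Lemma~\ref{factoring} and is therefore constant. (Equivalently, when $F$ admits a quadratic field extension $L$ one may simply run the first case with $L\supseteq K=F$.)

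To see that $V(M)\neq V(K)$: the generic point $\eta$ of $W_K=V_K\times_K V_K$ has residue field $M$, and its image under the first projection is the generic point of $V_K$, since projections from a product are dominant. This yields a point $\xi\in V_K(M)=V(M)$ whose image in $V_K$ is the generic point. As $\dim V_K=\dim V\geq 1$, that point is not closed, whereas the image in $V_K$ of any point of $V(K)$, viewed in $V(M)$, is a closed ($K$-rational) point. Hence $\xi$ does not lie in the image of $V(K)\hookrightarrow V(M)$, so $V(M)\neq V(K)$, as desired.

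The only genuine content, beyond this bookkeeping, is the goodness of $M^{\tau}$ — that is, that adjoining the generic point of the doubled variety creates no new points on curves of absolute genus $\geq 2$ — and that is exactly Lemma~\ref{nochange2} (together with its symmetric-square analogue in the degenerate case $\iota=1_K$). The points I expect to require the most care are: the Galois-descent identification $M^{\tau}=K(W_K)^{\Gal(K/F)}=F(\text{Weil restriction})$, which needs $W$ geometrically integral so that $K(W_K)$ is a field on which $\Gal(K/F)$ acts with the expected fixed field; verifying that $\sigma_V$ really has order two and lies over $\Spec\iota$; and supplying the auxiliary quadratic extension (or switching to the symmetric square) in the case $K=F$ when $F$ is quadratically closed.
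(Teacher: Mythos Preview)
Your construction is essentially identical to the paper's: the same $M = K(V_K \times_K V_K)$, the same swap-times-$\iota$ involution $\tau$, the same identification $M^{\tau}\cong F(\Res_{K/F} V_K)$ via Galois descent, the same appeal to Lemma~\ref{nochange2} for goodness, and the same projection to produce a new point of $V$. You are in fact more careful than the paper in the degenerate case $\iota = 1_K$: there $K=F$, so $\Res_{K/F} V_K = V$ rather than a doubled variety, the paper's identification $W_K\cong V_K\times V_K$ fails, and Lemma~\ref{nochange2} does not literally apply; your direct argument that a swap-invariant rational map $V\times V\dashrightarrow C$ must factor through both projections (via Lemma~\ref{factoring}) and hence be constant is exactly what is needed to close that gap.
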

\begin{proof}
Let $M = K(V \times V)$. Let $s$ denote the involution on the $F$-variety $V \times V$ that swaps the two factors. Let $(s,\iota) \colon (V \times V) \times_{\Spec F} \Spec K \rightarrow (V \times V) \times_{\Spec F} \Spec K$ denote the product involution and let $\tau$ be the map on function fields induced by this morphism of $F$-varieties. Then $(M,\tau)$ is an extension of $(K,\iota)$.

Let $W = \Res_{K/F} V_K$. There is a natural isomorphism between $M^{\tau}$ and $F(W)$, since $W_K$ is naturally isomorphic to $V_K \times V_K$ and the $\Gal (K/F)$ action on $K(W)$ coincides with the involution $\tau$ on $M$ under this identification. The extension $F(W)/F$ is good by Lemma~\ref{nochange2}. Let $\pi \colon V \times V \rightarrow V$ be one of the projections. The restriction of $\pi$ to the generic point gives rise to an element of $V(M) \setminus V(K)$.
\end{proof}
  
\begin{theorem}{\label{uniformquadratic}}
 There exists a field $L$ of characteristic zero and a quadratic extension $M$ of $L$ such that
 \begin{itemize}
  \item $C(L)$ is finite for every $C \in \mathscr{C}_L$, and
  \item $V(M)$ is infinite for every $V \in \mathscr{V}_M$.
 \end{itemize}
 In particular, $L$ is non-ample, and $M$ is PAC and hence ample.
\end{theorem}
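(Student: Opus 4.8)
The plan is to construct, by recursion on $n$ together with a bookkeeping argument, an increasing tower of finitely generated involuted fields
\[
(\Q, 1_\Q) = (K_0,\iota_0) \subseteq (K_1,\iota_1) \subseteq (K_2,\iota_2) \subseteq \cdots,
\]
writing $F_n := K_n^{\iota_n}$, and then to set $M := \varinjlim_n K_n$ and $L := \varinjlim_n F_n = M^{\iota_\infty}$, where $\iota_\infty := \varinjlim_n \iota_n$. At step $n$ I will apply Lemma~\ref{extension} to $(K_n,\iota_n)$ and a carefully chosen quasi-projective $V \in \mathscr{V}_{F_n}$, producing $(K_{n+1},\iota_{n+1})$ such that $F_{n+1}$ is a good extension of $F_n$ and $V(K_{n+1}) \neq V(K_n)$. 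Each $K_{n+1}$ is then the function field of a variety over the finitely generated field $K_n$, hence finitely generated over $\Q$; since every intermediate field of a finitely generated field extension is itself finitely generated, each $F_n$ is finitely generated over $\Q$. Thus, once the tower is built, Lemma~\ref{goodness} applied to the tower $(F_n)$ gives that $C(L)$ is finite for every $C \in \mathscr{C}_L$, which is the first bullet. Moreover $[M:L] = 2$: after the first step the involution is nontrivial (when $\iota = 1_K$, Lemma~\ref{extension} produces the swap on $V\times V$, which is nontrivial since $\dim V \geq 1$) and it can never revert to the identity, so $\iota_\infty \neq 1$ and $[M : M^{\iota_\infty}] = 2$. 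Everything then hinges on choosing, at each step, the right $V$ so that $M$ becomes PAC.

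For the bookkeeping, fix a bijection $n \mapsto (i(n), j(n))$ of $\N$ with $\N\times\N$ satisfying $i(n)\le n$ (e.g. the Cantor pairing), and, at the moment $K_i$ is created, fix an enumeration $V^{(i)}_0, V^{(i)}_1,\dots$ of the isomorphism classes of quasi-projective positive-dimensional geometrically integral $K_i$-varieties (possible since $K_i$ is countable). At step $n$, let $V := V^{(i(n))}_{j(n)}$, a variety over $K_{i(n)}$ with $i(n)\le n$; form $W_n := \Res_{K_{i(n)}/F_{i(n)}} V$, which is quasi-projective and, since over $\overline{F_{i(n)}}$ it becomes the product of the two conjugates of $V_{\overline{F_{i(n)}}}$, geometrically integral over $F_{i(n)}$; base-change it to $F_n$ and apply Lemma~\ref{extension} to $(K_n,\iota_n)$ and $(W_n)_{F_n} \in \mathscr{V}_{F_n}$. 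The translation back is the crucial point: because $K_{i(n)} \subseteq K_{n+1}$, the étale $F_{i(n)}$-algebra $K_{n+1}\otimes_{F_{i(n)}} K_{i(n)}$ is a product of copies of $K_{n+1}$, so $W_n(K_{n+1}) \cong V(K_{n+1})^{[K_{i(n)}:F_{i(n)}]}$ and likewise with $K_n$ in place of $K_{n+1}$; hence a new $K_{n+1}$-point of $(W_n)_{F_n}$ forces $V(K_{n+1}) \supsetneq V(K_n)$. Since the bijection visits every pair, for each $i$ and each quasi-projective positive-dimensional geometrically integral $K_i$-variety $V$ there is a step at which $V$ acquires a point, so $V(M) \neq \emptyset$.

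To conclude that $M$ is PAC: any geometrically integral $M$-variety contains a nonempty quasi-projective open subvariety, which is again geometrically integral and, being of finite type, is defined over some $K_i$ by a geometrically integral $K_i$-variety $V$ (geometric integrality descends along $M/K_i$); by the previous paragraph $V(M)\neq\emptyset$, so the original variety has an $M$-point. Hence $M$ is PAC, and therefore ample. For positive-dimensional geometrically integral $V/M$, the set $V(M)$ is then infinite, for otherwise $V$ with its (finitely many) $M$-points removed would be a nonempty geometrically integral $M$-variety with no $M$-point, contradicting PAC-ness; this is the second bullet. Finally $L$ is non-ample: fix a smooth projective geometrically integral curve $C_0/\Q$ of genus $\geq 2$ with a $\Q$-point, e.g. the Fermat quartic $\{x^4+y^4=z^4\}$; then $C := (C_0)_L \in \mathscr{C}_L$ has an $L$-point (automatically smooth, as $C$ is smooth), yet $C(L)$ is finite by the first bullet, hence not Zariski-dense in the positive-dimensional variety $C$, so $L$ fails the defining property of ample fields.

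The step I expect to be the main obstacle is the middle one: reconciling Lemma~\ref{extension}, which only places points on varieties defined over the fixed field $F_n$ of the involution, with the fact that the geometrically integral $M$-varieties we must hit descend only to the larger fields $K_i$. Routing through $W_n = \Res_{K_{i}/F_{i}}V$ and verifying that a new $K_{n+1}$-point of $W_n$ is genuinely a new $K_{n+1}$-point of $V$ — which relies on $K_i$ (and hence the quadratic $K_i/F_i$) already sitting inside $K_{n+1}$, so that $K_{n+1}\otimes_{F_i}K_i$ splits completely — is the technical heart. One must also take care that the enumeration only ever references fields already constructed (the constraint $i(n)\le n$) and that passing from $K_n$ to the fixed field $F_n$ preserves finite generation, so that Lemma~\ref{goodness} applies.
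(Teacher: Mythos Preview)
Your argument is correct and shares the paper's skeleton---a tower of finitely generated involuted fields built by iterating Lemma~\ref{extension}, with Lemma~\ref{goodness} supplying the finiteness of $C(L)$---but you organize the Weil-restriction step differently. The paper enumerates varieties over the \emph{fixed} fields $L_k$, arranges (via the set $S=\{V_{k,j}\times V_{k,j}:k=n\text{ or }j=n\}$) that each such variety is revisited at infinitely many stages so that $V(M)$ is infinite outright, and only at the very end passes from $\mathscr{V}_L$ to $\mathscr{V}_M$ by the one-shot conjugate-product trick $W_M=V\times V^{\iota}$. You instead enumerate varieties over the \emph{full} fields $K_i$, fold the restriction of scalars $W_n=\Res_{K_i/F_i}V$ into every inductive step, visit each variety once via a pairing function, and recover infiniteness a posteriori from PAC-ness. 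Your route treats $M$-varieties uniformly from the outset, at the cost of checking during the induction that $W_n$ is geometrically integral and that a new $K_{n+1}$-point of $W_n$ really yields a new $K_{n+1}$-point of $V$; your splitting argument for $K_{n+1}\otimes_{F_i}K_i$ does this, though strictly speaking $W_n(K_{n+1})$ is $V(K_{n+1})\times V(K_{n+1},\sigma)$ rather than literally $V(K_{n+1})^2$, and one uses that $\iota_{n+1}$ identifies the two factors to conclude. The paper's route postpones this translation to a single clean step at the end.
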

\begin{proof}
It suffices to prove the theorem with condition two replaced by the statement `$V(M)$ is infinite for every {\textit{quasi-projective}} $V \in \mathscr{V}_M$', since every $V \in \mathscr{V}_M$ contains a dense open quasi-projective (even affine) subvariety. So in the rest of the proof, we will implicitly assume that every $V \in \mathscr{V}_M$ is quasi-projective, so that we may freely use Lemmas~\ref{nochange2} and \ref{extension}.

We will first explain why the theorem does not simply follow from an iterated application of Lemma~\ref{extension}. Let $(M_0,\iota_0) = (\Q,1_{\Q})$. Let $V \in \mathscr{V}_{\Q}$. For every $k \geq 1$, let $(M_k,\iota_k)$ be the involuted field that we get by applying Lemma~\ref{extension} to the involuted field $(M_{k-1}, \iota_{k-1})$ and the variety $V_{M_{k-1}} \in \mathscr{V}_{M_{k-1}}$. Let $L_k = M_k^{\iota_k}$. Let $(M,\iota) = \varinjlim (M_k, \iota_k)$ and let $L = M^{\iota}$. By Lemmas~\ref{goodness} and \ref{extension} and the fact that every curve $C \in \mathscr{C}_L$ is defined over $L_k$ for some $k$, it follows that $C(L)$ is finite for every $C \in \mathscr{C}_L$. By construction $V(M)$ is infinite. Let $V' \in \mathscr{V}_{\Q}$ be another variety that we want to endow with infinitely many $M$-points by enlarging $M$. We can now repeatedly apply Lemma~\ref{extension} to $(M,\iota)$ and the variety $V'$ to obtain an involuted field $(O,\tilde{\iota})$ such that $V'(O)$ is infinite. Let $N = 
O^{\tilde{\iota}}$. The problem is that we can no longer appeal to Lemma~\ref{goodness} to say that $C(N)$ is finite for every $C \in \mathscr{C}_N$; even though $N$ is obtained from $L$ by a successive series of good extensions, the field $L$ itself is not a {\textit{finitely generated}} extension of $\Q$. Another issue is that as we enlarge our field $M_0$ to give all varieties in $\mathscr{V}_\Q$ infinitely many points, we also inadvertently end up enlarging $M$, which gives us {\textit{new}} elements of $\mathscr{V}_{M}$.  We now need to ensure that every element of $\mathscr{V}_M$ has infinitely many points over $M$, not just the varieties that come from $\mathscr{V}_{\Q}$! 

We resolve both issues by inductively constructing 
  \begin{itemize}
   \item a tower of {\textit{finitely generated}} involuted fields $(\Q, 1_{\Q}) = (M_0, \iota_0) \subset (M_1, \iota_1) \subset (M_2, \iota_2) \subset \ldots$ such that each successive extension in the corresponding tower of fixed fields $L_0 \subset L_1 \subset L_2 \subset \ldots$ is a good extension, and, 
   \item an enumeration of $\mathscr{V}_{L_k}$ for every $k$.
  \end{itemize}
During each inductive step, we enlarge $(M_k,\iota_k)$ in a way that gives new points to some finite subset of the varieties enumerated thus far, in such a way that in the limit, all the varieties in $\mathscr{V}_L$ acquire infinitely many points over $M$. We now make the inductive step more precise.

To begin the induction, set $(M_0,\iota_0) = (\Q,1_{\Q})$ and fix any enumeration of $\mathscr{V}_{\Q}$. Now assume that we have defined $(M_k,\iota_k)$ for all $k \leq n$, and that we have fixed an enumeration $\{ V_{k,0}, V_{k,1}, V_{k,2}, \ldots \}$ of $\mathscr{V}_k$ for every $k \leq n$. Let $S = \{ V_{k,j} \times V_{k,j} \ |\ k = n \ \textup{or}\ j = n \}$ . Let $M_{n+1} = M_n(S)$. A repeated application of Lemma~\ref{extension} lets us extend $\iota_n$ to an involution $\iota_{n+1}$ of $M_{n+1}$ such that the corresponding extension of fixed fields $L_{n+1}/L_n$ is a good extension. The field $M_{n+1}$ is a finitely generated extension of $\Q$, since $M_n$ is a finitely generated extension of $\Q$, and function fields of geometrically integral varieties over finitely generated fields are finitely generated. Let $\{ V_{n+1,0}, V_{n+1,1}, \ldots \}$ be any enumeration of the set of geometrically integral $L_{n+1}$-curves. This completes the induction.

Lemma~\ref{goodness} tells us that $C(L)$ is finite for every $C \in \mathscr{C}_L$. Fix $V \in \mathscr{V}_L$. We have to show that $V(M)$ is infinite. Suppose that this is false. Since $M = \varinjlim M_i$, there exists a positive integer $k$ such that $V$ is defined over $L_k$ and $V(M) = V(M_k)$. Let $p = k+1$. Then there exists an integer $q$ such that $V_{M_p} = V_{p,q}$. Let $r = \mathrm{max}(p,q)+1$. By the construction of $M$, it follows that there exists an element of $V(M_r) \setminus V(M_k)$. This is a contradiction, since $V(M_r) \subset V(M) = V(M_k)$.

So far we have proved that $V(M)$ is infinite for every quasi-projective $V \in \mathscr{V}_L$, but we need to prove that this holds for every quasi-projective $V \in \mathscr{V}_M$. Let $V \in \mathscr{V}_M$ and let $V^\iota \in \mathscr{V}_M$ be the conjugate variety. Then, there exists a quasi-projective $W \in \mathscr{V}_L$ such that $W_M = V \times V^{\iota}$. By applying the theorem to $W \in \mathscr{V}_L$ and using the identification $V(M) \times V^{\iota}(M) = W(M)$, we conclude that either $V(M)$ or $V^\iota(M)$ is infinite. Since $\iota$ induces a bijection between $V(M)$ and $V^{\iota}(M)$, we conclude that $V(M)$ is infinite.
\end{proof}

\subsection*{Remarks.}
 \begin{itemize}
  \item Since each step in the tower $M_0 \subset M_1 \subset M_2 \subset \cdots$ is a finitely generated regular extension, it follows that $M = \varinjlim M_i$ and its subfield $L$ are both Hilbertian. It is not known whether there are infinite non-ample fields that are not Hilbertian.
  \item The technique of constructing towers of field extensions by adjoining generic points of absolutely irreducible varieties goes back to \cite[Theorem~2.6]{geyerjar} where it was used to construct a non PAC field with separably closed Henselian closure.
  \item One can also produce a virtually ample non-ample field in characteristic $p$ by starting with $M_0 = \mathbb{F}_p(t)$ instead of $\mathbb{Q}$ in the theorem above, using the fact that $C(M_0)$ is finite for any non-isotrivial $M_0$-curve $C$ of absolute genus $>1$. 
 \end{itemize}

\section*{Acknowledgements}
I would like to thank my advisor, Bjorn Poonen, for suggesting this problem to me, for several helpful discussions, and for his suggestions for improving the exposition. I also thank Moshe Jarden for comments.

\begin{bibdiv}
\begin{biblist}


\bib{barysorfehm}{article}{
   author={Bary-Soroker, Lior},
   author={Fehm, Arno},
   title={Open problems in the theory of ample fields},
   language={English, with English and French summaries},
   conference={
      title={Geometric and differential Galois theories},
   },
   book={
      series={S\'emin. Congr.},
      volume={27},
      publisher={Soc. Math. France, Paris},
   },
   date={2013},
   pages={1--11},
   review={\MR{3203546}},
}

\bib{geyerjar}{article}{
   author={Geyer, Wulf-Dieter},
   author={Jarden, Moshe},
   title={Non-PAC fields whose Henselian closures are separably closed},
   journal={Math. Res. Lett.},
   volume={8},
   date={2001},
   number={4},
   pages={509--519},
   issn={1073-2780},
   review={\MR{1849266}},
   doi={10.4310/MRL.2001.v8.n4.a10},
}

\bib{jar}{book}{
   author={Jarden, Moshe},
   title={Algebraic patching},
   series={Springer Monographs in Mathematics},
   publisher={Springer, Heidelberg},
   date={2011},
   pages={xxiv+290},
   isbn={978-3-642-15127-9},
   review={\MR{2768285 (2012e:12007)}},
   doi={10.1007/978-3-642-15128-6},
}

\bib{jarpoon}{article}{
   author={Jarden, Moshe},
   author={Poonen, Bjorn},
   title={Galois points on varieties},
   journal={J. Ramanujan Math. Soc.},
   volume={31},
   date={2016},
   number={2},
   pages={189--194},
   issn={0970-1249},
   review={\MR{3518181}},
}



\bib{lang}{book}{
   author={Lang, Serge},
   title={Number theory. III},
   series={Encyclopaedia of Mathematical Sciences},
   volume={60},
   note={Diophantine geometry},
   publisher={Springer-Verlag, Berlin},
   date={1991},
   pages={xiv+296},
   isbn={3-540-53004-5},
   review={\MR{1112552 (93a:11048)}},
   doi={10.1007/978-3-642-58227-1},
}

\bib{pop}{article}{
   author={Pop, Florian},
   title={Embedding problems over large fields},
   journal={Ann. of Math. (2)},
   volume={144},
   date={1996},
   number={1},
   pages={1--34},
   issn={0003-486X},
   review={\MR{1405941 (97h:12013)}},
   doi={10.2307/2118581},
}


\end{biblist}
\end{bibdiv}

\end{document}